\newtheorem{theorem}{Theorem}[section]
\newtheorem{lemma}[theorem]{Lemma}
\newtheorem{remark}[theorem]{Remark}
\begin{document}

\title{Local Gromov-Witten invariants of blowups of  Fano surfaces}

\author[Jianxun Hu$^1$]{Jianxun Hu$^1$  }
\address{Department of Mathematics\\ Zhongshan
University\\Guangzhou\\ P. R. China}
\email{stsjxhu@mail.sysu.edu.cn}
\thanks{${}^1$Partially supported by the NSFC Grant (10631050 and 10825105), NKBRPC (2006CB805905) }

\maketitle
\begin{abstract}
   In this paper, using the degeneration formula we obtain a
   blowup formulae of local Gromov-Witten invariants of
   Fano surfaces. This formula makes it possible to compute the
   local Gromov-Witten invariants of non-toric Fano surfaces from
   toric Fano surface, such as del Pezzo surfaces. This formula
   also verified an expectation of Chiang-Klemm-Yau-Zaslow in the section 8.3 of \cite{CKYZ}.
\end{abstract}

\tableofcontents

\section{Introduction}
Local del Pezzo surface used to play an important role in physics.
Local de Pezzo surfaces are usually associated to phase
transitions in the K\"ahler moduli space of various string,
M-theory, and F-theory compactifications. More precisely, del
Pezzo contractions in Calabi-Yau threefolds are related to quantum
field theories in four and five dimensions \cite{DKV, KKV, MS} via
geometric engineering. Non-toric del Pezzo surfaces seem to be
related to exotic physics in four, five and six dimensions such as
nontrivial fixed points of the renormalization group \cite{GMS}
without lagrangian description and strongly interacting
noncritical strings. There is also a relation between non-toric
del Pezzo surfaces and string junctions in F-theory
\cite{KMV,LMW}. Certain problems of physical interest such as
counting of BPS states reduce to questions related to topological
strings on local del Pezzo surfaces.

``Local mirror symmetry" mathematically refers to a specialization
of mirror symmetry techniques to address the geometry of Fano
surfaces within Calabi-Yau manifolds. The procedure produces
certain ``invariants" associated to surfaces.

Let $S$ be a Fano surface and $K_S$ its canonical bundle. For
$\beta\in H_2(S, \mathbb{Z})$, denote by $\overline{\mathcal
M}_{g,k}(S,\beta)$ the moduli space of $k$-pointed stable maps of
degree $\beta$ to $S$. Then the following diagram
\begin{equation*}
  \begin{array}{ccc}
   \overline{\mathcal M}_{g,1}(S,\beta) &
   \stackrel{ev}{\longrightarrow} & S\\
    &  &  \\
    \rho\downarrow &  &  \\
    &   &  \\
    \overline{\mathcal M}_{g,0}(S,\beta) & &
    \end{array}
\end{equation*}
defines the obstruction bundle $R^1\rho_*ev^*K_S$ whose fiber over
a stable map $f:C\longrightarrow S$ is given by $H^1(C, f^*K_S)$.

One can define the local Gromov-Witten invariants \cite{CKYZ} of
$K_S$ by
\begin{equation}\label{local-definition}
  K_{g,\beta}^S = \int_{[\overline{\mathcal M}_{g,0}(S,\beta)]^{vir}}
  e(R^1\rho_*ev^*K_S).
\end{equation}

Yang-Zhou \cite{YZ} generalized this definition of local
Gromov-Witten invariants to the canonical line bundles of toric
surfaces, not necessarily Fano. When $S$ is toric, one can use the
localization technique to compute the local Gromov-Witten
invariants, see \cite{CKYZ, KZ, YZ}. When $S$ is non-toric Fano,
the localization technique is no longer valid. Therefore few
results on local Gromov-Witten invariants of non-toric Fano
surface are known. In this paper, we will study how to compute the
local Gromov-Witten invariants of some non-toric surfaces from
that of some toric surfaces.

Denote by $Y_S = \mathbb{P}(K_S\oplus {\mathcal O})$ the
projective bundle completion of the total space of the canonical
bundle $K_S$. Then $Y_S$ has two canonical sections $S^+, S^-$
with normal bundle $N_{S^+|Y_S}\cong K_S$ and $N_{S^-|Y_S}\cong -
K_S$ respectively. Then the normal bundle of a surface $S^+$
inside $Y_S$ is negative. If the image of a stable map lies in
$S^+$, it is not able to deform it outside of $S^+$. This means if
we denote also by $\beta$ the image of a class $\beta\in
H_2(S,\mathbb{Z})$ under the inclusion map $S\hookrightarrow Y_S$
via the section $S^+$, then one has $\overline{\mathcal
M}_{g,0}(Y_S,\beta) = \overline{\mathcal M}_{g,0}(S,\beta)$. By
the constructions of the virtual fundamental cycles, we have
\begin{equation}\label{virtual-cycle}
[\overline{\mathcal M}_{g,0}(Y_S,\beta)]^{vir} =
[\overline{\mathcal M}_{g,0}(S:\beta)]^{vir}\cap
e(R^1\rho_*ev^*K_S).
\end{equation}
Denote the Gromov-Witten invariant of $Y_S$ of degree $\beta$ by
\begin{equation}
   n^{Y_S}_{g,\beta} = \int_{[\overline{\mathcal
   M}_{g,0}(Y_S,\beta)]^{vir}}1.
\end{equation}
Therefore, from (\ref{local-definition}) and
(\ref{virtual-cycle}), we have
\begin{equation}\label{local=GW}
    K^S_{g,\beta} = n^{Y_S}_{g,\beta}.
\end{equation}

Denote by $p:\tilde{S}\longrightarrow S$ the natural projection of
the blow-up of $S$ at a smooth point $p_0\in S$. Let $\beta\in
H_2(S,\mathbb{Z})$ and $p!(\beta) = PDp^*PD(\beta)\in
H_2(\tilde{S}, \mathbb{Z})$. In \cite{CKYZ}, the authors computed
the genus zero local Gromov-Witten invariants of $\mathbb{P}^2$
and the Hirzebruch surface $\mathbb{F}_1$ via the localization
technique in the case of lower degrees. They observed that the
genus zero local Gromov-Witten invariants of $K_{\mathbb{P}^2}$ of
degree $\beta$ are equal to the genus zero local Gromov-Witten
invariants of $K_{\mathbb{F}_1}$ of degree $p!(\beta)$. In this
paper, we use the degeneration formula to study the change of
local Gromov-Witten invariants under the blowup of the Fano
surfaces and verify their observation and generalize it to any
genus case. Our main theorem is

\begin{theorem}\label{thm-1-1}
Suppose that $S$ is a Fano surface and  its blowup, $\tilde{S}$,
of $S$ at a smooth point $p$  is also Fano. Let $\beta\in
H_2(S,\mathbb{Z})$. Then for any genus $g$, we have
\begin{equation}
    K^S_{g,\beta} = K^{\tilde{S}}_{g,p!(\beta)},
\end{equation}
where $p:\tilde{S}\longrightarrow S$ is the natural projection of
the blowup.
\end{theorem}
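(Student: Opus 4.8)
The plan is to pass from the local invariants to honest Gromov--Witten invariants of the projective threefolds and then to compare the two threefolds by a single degeneration. By (\ref{local=GW}) we have $K^S_{g,\beta}=n^{Y_S}_{g,\beta}$ and $K^{\tilde S}_{g,p!(\beta)}=n^{Y_{\tilde S}}_{g,p!(\beta)}$, where $Y_S=\mathbb P(K_S\oplus\mathcal O)$ and $Y_{\tilde S}=\mathbb P(K_{\tilde S}\oplus\mathcal O)$. Hence it suffices to prove the identity of ordinary genus-$g$ Gromov--Witten invariants
\begin{equation*}
 n^{Y_S}_{g,\beta}=n^{Y_{\tilde S}}_{g,p!(\beta)}.
\end{equation*}
The advantage of this reformulation is that the obstruction-bundle Euler class in (\ref{local-definition}) has been absorbed into the virtual class of a smooth projective target via (\ref{virtual-cycle}), so one may work with the standard degeneration formula for absolute invariants rather than with the more delicate twisted theory.

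Next I would construct the degeneration by deformation to the normal cone of the point $p_0\in S$. Blowing up $S\times\Delta$ along $(p_0,0)$ produces a family of surfaces $\mathcal S\to\Delta$ whose general fibre is $S$ and whose central fibre is the normal-crossing union $\tilde S\cup_E\mathbb P^2$, where $E\cong\mathbb P^1$ is the exceptional curve of $p\colon\tilde S\to S$, realized as a line in $\mathbb P^2$. Taking the relative projective completion $\mathcal Y=\mathbb P(\omega_{\mathcal S/\Delta}\oplus\mathcal O)\to\Delta$ of the relative canonical sheaf yields a family of threefolds with general fibre $Y_S$; crucially, adjunction in the family forces $\omega_{\mathcal S/\Delta}$ to restrict to $K_{\tilde S}=p^*K_S\otimes\mathcal O(E)$ on the proper transform $\tilde S$ (and to a twist of $K_{\mathbb P^2}$ on the other component), so the central fibre of $\mathcal Y$ is exactly $Y_{\tilde S}\cup_D P$, with $D$ a $\mathbb P^1$-bundle over $E$ and $P$ a projective completion over $\mathbb P^2$. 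This is what makes $Y_{\tilde S}$ itself, rather than $\mathbb P(p^*K_S\oplus\mathcal O)$, appear as a component: the automatic $\mathcal O(E)$ twist in the relative dualizing sheaf is precisely the elementary transformation relating the two $\mathbb P^1$-bundles.

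The heart of the argument is a numerical vanishing that collapses the degeneration formula to a single term. Applying the formula to $n^{Y_S}_{g,\beta}$ expresses it as a sum over splittings of the total class and over matching relative contact data along $D$, of products of relative Gromov--Witten invariants of the pairs $(Y_{\tilde S},D)$ and $(P,D)$. Since $E$ is $p$-exceptional, the projection formula gives $p!(\beta)\cdot E=\beta\cdot p_*E=0$, so no stable map in the relevant class can carry positive degree into the exceptional locus. I would show that every splitting placing nonzero degree on the $P$-component is thereby forced to have positive intersection with $D$ and with $E$, contradicting $p!(\beta)\cdot E=0$, and hence contributes zero. Only the distinguished splitting survives, in which the entire curve lies on the $Y_{\tilde S}$ side in class $p!(\beta)$ with empty relative contact along $D$; its relative invariant reduces to the absolute invariant $n^{Y_{\tilde S}}_{g,p!(\beta)}$. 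This yields $n^{Y_S}_{g,\beta}=n^{Y_{\tilde S}}_{g,p!(\beta)}$ and hence the theorem.

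I expect the main obstacle to be exactly this vanishing together with the bookkeeping of the relative contributions. One must check that the bubble pair $(P,D)$ contributes no nontrivial connected relative invariant compatible with $p!(\beta)\cdot E=0$, that the reduction of the surviving relative invariant of $(Y_{\tilde S},D)$ to the absolute invariant is legitimate (via deformation invariance and a matching of virtual dimensions), and that the obstruction geometry recorded in $K_{\tilde S}=p^*K_S\otimes\mathcal O(E)$ produces no extra invariants supported on $E$. Verifying that the relative dualizing sheaf supplies the $\mathcal O(E)$ twist cleanly—so that the relative theories on the two components genuinely match term by term—is the delicate technical point, while the Fano hypotheses on both $S$ and $\tilde S$ are needed to guarantee compactness of the moduli spaces, the expected dimension count, and the absence of stray curve classes of negative canonical degree entering the sum.
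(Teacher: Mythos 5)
Your opening reduction, via (\ref{local=GW}), to the statement $n^{Y_S}_{g,\beta}=n^{Y_{\tilde S}}_{g,p!(\beta)}$ is exactly the paper's starting point, and your idea of realizing this by a single degeneration (deformation to the normal cone of $p_0$ at the surface level, then a relative projective completion) is genuinely different from the paper's route, which never produces $Y_{\tilde S}$ as a degenerate component of $Y_S$ at all: the paper instead blows up $Y_S$ along the fibre to get $\tilde Y_S=\mathbb P(p^*K_S\oplus\mathcal O)$, which is \emph{not} $Y_{\tilde S}$, and then compares $\tilde Y_S$ with $Y_{\tilde S}$ through their common blowup $Z$, using four separate degenerations (Theorems \ref{result-3}, \ref{result-4}, \ref{result-5}). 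Unfortunately your shortcut, as written, has a concrete error at its ``crucial'' step. For $\mathcal S=\mathrm{Bl}_{(p_0,0)}(S\times\Delta)$ with blowdown $q$ and exceptional divisor $\mathcal E\cong\mathbb P^2$, one has $K_{\mathcal S}=q^*K_{S\times\Delta}+2\mathcal E$, and $\mathcal E$ meets the proper transform $\tilde S$ transversely in $E$, so
$$
\omega_{\mathcal S/\Delta}\big|_{\tilde S}\;=\;p^*K_S\otimes\mathcal O(2E)\;=\;K_{\tilde S}\otimes\mathcal O(E),
$$
not $K_{\tilde S}$. Hence the central fibre of $\mathbb P(\omega_{\mathcal S/\Delta}\oplus\mathcal O)$ has component $\mathbb P\bigl(K_{\tilde S}(E)\oplus\mathcal O\bigr)$ rather than $Y_{\tilde S}$; this $\mathcal O(E)$ discrepancy (an elementary transformation) is precisely the difficulty the paper's intermediate spaces $\tilde Y_S$ and $Z$ exist to handle. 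Your construction is repairable: the twist $\mathcal L:=\omega_{\mathcal S/\Delta}\otimes\mathcal O(-\mathcal E)$ restricts to $K_S$ on general fibres, to $K_{\tilde S}$ on $\tilde S$, and to $\mathcal O_{\mathbb P^2}(-1)$ on $\mathcal E$, so $\mathbb P(\mathcal L\oplus\mathcal O)\to\Delta$ does degenerate $Y_S$ into $Y_{\tilde S}\cup_D\mathbb P(\mathcal O_{\mathbb P^2}(-1)\oplus\mathcal O)$ --- but that is a different bundle from the one you wrote down, and the claim must be proved, not attributed to ``adjunction in the family,'' which gives the opposite conclusion.

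The second gap is the vanishing argument, and it is not cosmetic. The degeneration formula constrains a splitting only through $j_{1*}\beta_1+j_{2*}\beta_2=i_{t*}\beta$ in $H_2(\chi)$, and in the central fibre the class $[E]\subset\tilde S^+\subset Y_{\tilde S}$ becomes homologous to a line $[\ell]\subset\mathbb P^2\subset P$ (both are lines in $\mathcal E$), while the fibre class $[F]$ of $Y_{\tilde S}\to\tilde S$ becomes homologous to the fibre class $[f]$ of $P\to\mathbb P^2$ (both are fibres of $D\to E$). Consequently splittings such as $\beta_1=p!(\beta)-k[F]$, $\beta_2=k[f]$ (with $\beta_2\cdot D=k>0$, hence nontrivial contact partition $\mu$) are perfectly admissible and are in no way excluded by $p!(\beta)\cdot E=0$; connectedness only removes the $\mu=\emptyset$ terms with both sides nonempty. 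So your sum does not collapse to one term by homological bookkeeping. This is exactly where the paper does its real work: in each of its degenerations it computes $C_1(X_2)$ from (\ref{chernclass}), proves a lower bound of the form $\sum_i C_1[u_i^+]\geq c\sum\mu_i$ with $c\geq 3$, and combines this with the dimension relation of Lemma \ref{dimension} to force $\dim_{\mathbb C}\overline{\mathcal M}_{\Gamma_1}<0$ for every nontrivial $\mu$, whence those insertion-free relative invariants vanish. Your proposal needs the analogous Chern-class positivity estimate for your bubble $\mathbb P(\mathcal O_{\mathbb P^2}(-1)\oplus\mathcal O)$ (where, note, $C_1$ is not nef, so the estimate requires the same kind of care the paper takes in Lemma \ref{result-2}); without it, Theorem \ref{thm-1-1} does not follow from your argument.
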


\begin{remark}
   Theorem \ref{thm-1-1} confirmed the Chiang-Klemm-Yau-Zaslow's
   expectation about the genus zero local Gromov-Witten invariants
   of $K_{\mathbb P}^1$ and $K_{{\mathbb F}_1}$ and generalized their
   expectation to any genus. In particular, our theorem also make
   it possible to compute the local Gromov-Witten invariants of
   nontoric del Pezzo surfaces $\tilde{\mathbb P}^2_r$, $4\leq r\leq 8$
   from the local Gromov-Witten invariants of toric del Pezzo
   surfaces $\tilde{\mathbb P}^2_r$, $1\leq r\leq 3$.
\end{remark}

\begin{remark}
    In \cite{LLW}, the authors consider the genus zero open Gromov-Witten
    invariants.
\end{remark}

{\bf Acknowledgements} The author would like to thank  Prof.
Yongbin Ruan, Wei-Ping Li and Zhenbo Qin for their valuable
discussions. The author also would like to thank M. Roth for
explaining the properties of local del Pezzo surfaces during our
visiting MPI-Bonn.

\section{Gromov-Witten invariants}

We use \cite{CK, LR} as our general reference on moduli spaces of
stable maps, Absolute/relative Gromov-Witten invariants and its
degeneration formula.

Let $X$ be a smooth complex projective manifold and $\beta\in
H_2(X,{\mathbb Z})$. Let $\overline{\mathcal M}_{g,n}(X,\beta)$ be
the moduli space of $n$-pointed stable maps $f:(\Sigma;
x_1,\cdots,x_n)$ $\longrightarrow X$ from a nodal curve $\Sigma$ with
arithmetic genus $g(\Sigma) = g$ and degree $[f(\Sigma)] =
\beta$. Let $e_i: \overline{\mathcal
M}_{g,n}(X,\beta)\longrightarrow X$ be the evaluation maps
$f\mapsto f(x_i)$. The Gromov-Witten invariant for classes
$\alpha_i\in H^*(X)$, $1\leq i\leq n$, is given by
$$
\langle \alpha_1,\cdots,\alpha_n\rangle^X_{g,n,\beta}
:=\int_{[\overline{\mathcal
M}_{g,n}(X,\beta)]^{vir}}e_1^*\alpha_1\cdots e_n^*\alpha_n.
$$

The degeneration formula \cite{LR, IP, Li} provides a rigorous
formulation about the change of Gromov-Witten invariants under the
semi-stable degeneration, or symplectic cutting. The formula
related  the absolute Gromov-Witten invariant of $X$ to the
relative Gromov-Witten invariants of two smooth pairs.

Now we recall the relative invariants of a smooth pair $(X,Z)$
with $Z\hookrightarrow X$ a smooth divisor. Let $\beta\in
H_2(X,{\mathbb Z})$ and $\mu = \{\mu_1,\cdots,\mu_{\ell(\mu)}\}\in
{\mathbb N}^{\ell(\mu)}$ be a partition of
$|\mu|:=\sum_{i=1}^{\ell(\mu)}\mu_i = \beta\cdot Z$. Let $\Gamma =
(g,n,\beta,\mu)$ be a relative graph. For $A\in H^*(X)^{\otimes
n}$ and $\delta_\mu\in H^*(Z)^{\otimes \ell(\mu)}$, the relative
invariant of stable maps with topological type $\Gamma$(i.e. with
contact order $\mu_i$ in $Z$ at the $i$-th relative point) is
$$
\langle A\mid \delta_\mu\rangle_{\Gamma}^{X,Z} :=
\int_{[\overline{\mathcal M}_\Gamma (X,Z)]^{vir}}e_X^*A\cup e^*_Z
\delta_\mu
$$
where $e_X: \overline{\mathcal M}_\Gamma(X,Z)\longrightarrow
X^n$, $e_Z: \overline{\mathcal M}_\Gamma(X,Z)\longrightarrow
Z^{\ell(\mu)}$ are evaluation maps on absolute marked points and relative marked
points respectively.

If $\Gamma = \coprod_\pi \Gamma^\pi$, the relative invariants
(with disconnceted domain curves)
$$
  \langle A\mid \delta_\mu\rangle_\Gamma^{\bullet X,Z}:=
  \prod_\pi\langle A\mid \delta_\mu \rangle_{\Gamma^\pi}^{X,Z}
$$
is defined to be the product of each connected component.

In the following, we shall discuss the degeneration formula which is the main tool employed in this paper.

Let $\pi : \chi \longrightarrow D$ be a smooth 4-fold over a
 disk $D$
such that $\chi_t = \pi^{-1}(t) \cong X $ for $t\not= 0$ and $\chi_0$ is a union of two smooth 3-folds $X_1$
and $X_2$ intersecting transversely along a smooth surface $Z$. We write $\chi_0 = X_1\cup_Z X_2$. Assume that
 $Z$ is simply connected.

Consider the natural maps
$$
  i_t: X=\chi_t \longrightarrow \chi,\,\,\,\,\,\,\,\,
  i_0:\chi_0\longrightarrow \chi,
$$
and the gluing map
$$
  g= (j_1,j_2) : X_1\coprod X_2\longrightarrow \chi_0.
$$
We have
$$
  H_2(X)\stackrel{i_{t*}}{\longrightarrow}
  H_2(\chi)\stackrel{i_{0_*}}{\longleftarrow}
  H_2(\chi_0)\stackrel{g_*}{\longleftarrow} H_2(X_1)\oplus
  H_2(X_2),
$$
where $i_{0*}$ is an isomorphism since there exists a deformation retract from $\chi$ to $\chi_0$(see \cite{C}) and
$g_*$ is surjective from Mayer-Vietoris sequence. For $\beta\in H_2(X)$, there exist $\beta_1\in H_1(X_1)$ and $\beta_2\in H_2(X_2)$
such that
$$
  i_{t*}(\beta) = i_{0_*}(j_{1_*}(\beta_1) + j_{2_*}(\beta_2)).
$$
For simplicity, we write $\beta = \beta_1+\beta_2$ instead.

Since the family $\chi\longrightarrow D$ comes from a trivial family, all cohomology classes $\alpha\in H^*(X)^{\otimes n}$ have
global liftings and the restriction $\alpha(t)$ on $\chi_t$ is defined for all $t$.

For $\{\delta_i\}$ a basis of $H^*(Z)$ with $\{\delta^i\}$ its dual basis and a partition $\mu$, denote $\delta_\mu
=\delta_{i_1}\otimes\cdots\otimes \delta_{i_{\ell(\mu)}}$ and its dual $\check{\delta}_\mu=\delta^{i_1}\otimes\cdots\otimes \delta^{i_{\ell(\mu)}}$.
The degeneration formula expresses the absolute invariants of $X$ in terms of the relative invariants of the two smooth pairs $(X_1,Z)$ and $(X_2,Z)$:
\begin{equation}
  \langle \alpha\rangle^X_{g,n, \beta} =
  \sum_\mu\sum_{\eta\in\Omega_\beta} C_\eta\langle
  j_1^*\alpha(0)\mid \delta_\mu\rangle^{\bullet
  X_1,Z}_{\Gamma_1}\langle j_2^*\alpha(0)\mid
  \check{\delta}_\mu\rangle^{\bullet X_2,Z}_{\Gamma_2}.
\end{equation}
Here $\eta = (\Gamma_1,\Gamma_2, I_{\ell(\mu)})$ is an admissible triple which consists of (possibly disconnected) topological types
$$
  \Gamma_i = \coprod _{\pi=1}^{|\Gamma_i|}\Gamma_i^\pi
$$
with the same contact order partition $\mu$ under the
identification $I_\mu$ of relative marked points. The gluing
$\Gamma_1 +_{I_{\ell{\mu}}} \Gamma_2$ has type $(g,n,\beta)$ and
is connected. In particular, $\ell(\mu)=0$ if and only if that one
of the $\Gamma_i$ is empty. The total genus $g_i$, total number of
absolute marked  points $n_i$ and the total degree $\beta_i\in
H_2(X_i)$ satisfy the splitting relations $g=
g_1+g_2+\ell(\mu)+1-|\Gamma_1| -|\Gamma_2|$, $n_1+n_2 = n$ and $
\beta_1 + \beta_2 = \beta$.

The constants $C_\eta = m(\mu)/|\mbox{Aut} \eta|$, where $m(\mu) = \prod \mu_i$ and $\mbox{Aut} \eta = \{\sigma\in S_{\ell(\mu)}\mid \eta^\sigma = \eta\}$.
We denote by $\Omega$ the equivalence class of all admissible triples, also by $\Omega_\beta$ and $\Omega_\mu$ the subset with fixed degree $\beta$
and fixed contact order $\mu$ respectively.

 For the dimensions of the related moduli spaces in the
 degeneration formula, we have

 \begin{lemma}\label{dimension} With the assumption as above,
\begin{equation}
\dim_{\mathbb C} \overline{\mathcal M}_{\Gamma_1} + \dim_{\mathbb C} \overline{\mathcal M}_{\Gamma_2} = \dim_{\mathbb C} \overline{\mathcal M}_\Gamma + 2\ell(\mu).
 \end{equation}
 \end{lemma}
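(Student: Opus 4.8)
The plan is to read off both sides from the virtual dimension formulas for absolute and relative stable maps and then reduce the claimed equality to a single identity among first Chern numbers that encodes the geometry of the degeneration. First I would record that, since each of $X, X_1, X_2$ is a smooth threefold, the Euler-characteristic term $(\dim_{\mathbb C} X - 3)(1-g)$ in every dimension formula vanishes; this is what makes the count clean and, incidentally, insensitive to whether the domain curves are connected. For the absolute graph $\Gamma=(g,n,\beta)$ one then has
\[
\dim_{\mathbb C}\overline{\mathcal M}_\Gamma = c_1(T_X)\cdot\beta + n,
\]
while for each relative graph $\Gamma_i=(g_i,n_i,\beta_i,\mu)$ to the pair $(X_i,Z)$ the tangency conditions modify the count to
\[
\dim_{\mathbb C}\overline{\mathcal M}_{\Gamma_i} = c_1(T_{X_i})\cdot\beta_i - \beta_i\cdot Z + n_i + \ell(\mu) = c_1\big(T_{X_i}(-\log Z)\big)\cdot\beta_i + n_i + \ell(\mu).
\]

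Next I would add the two relative dimensions and substitute the splitting relations $n_1+n_2=n$ and $\beta_1+\beta_2=\beta$. On the left this yields $c_1(T_{X_1})\cdot\beta_1 + c_1(T_{X_2})\cdot\beta_2 - (\beta_1+\beta_2)\cdot Z + n + 2\ell(\mu)$, whereas the right-hand side of the lemma is $c_1(T_X)\cdot\beta + n + 2\ell(\mu)$. The common summand $n+2\ell(\mu)$ cancels, so the lemma becomes equivalent to the single identity
\[
c_1\big(T_{X_1}(-\log Z)\big)\cdot\beta_1 + c_1\big(T_{X_2}(-\log Z)\big)\cdot\beta_2 = c_1(T_X)\cdot\beta. \qquad(\star)
\]
Thus all the bookkeeping funnels into verifying $(\star)$.

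The crux, and the step I expect to be the main obstacle, is $(\star)$, since it is the only place where the geometry of the semistable degeneration enters rather than formal index theory. I would prove it by deformation invariance of a Chern number on the total space $\chi$: the relative logarithmic tangent bundle $T_{\chi/D}(-\log\chi_0)$ restricts to $T_X$ on a general fibre $\chi_t$ and to $T_{X_i}(-\log Z)$ on the component $X_i\subset\chi_0$, so pairing its first Chern class, a fixed element of $H^2(\chi)$, against the homologous families of curve classes $i_{t*}\beta = i_{0*}(\beta_1+\beta_2)$ and restricting to the two fibres gives exactly $(\star)$. The gluing that makes this logarithmic bundle well defined across $Z$ rests on the normal-bundle relation $N_{Z/X_1}\otimes N_{Z/X_2}\cong\mathcal O_Z$, which follows from the smoothness of $\chi$: because $\chi_0=\pi^{-1}(0)$ is a fibre, $\mathcal O_\chi(\chi_0)$ is trivial, forcing $N_{X_i/\chi}\cong\mathcal O_{X_i}(-Z)$, and restricting to $Z$ together with the transversality $N_{Z/\chi}\cong N_{Z/X_1}\oplus N_{Z/X_2}$ gives $N_{Z/X_2}\cong N_{Z/X_1}^{-1}$. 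As a consistency check one can feed this relation and adjunction, $c_1(T_{X_i}(-\log Z)) = -K_{X_i}-[Z]$, into a direct computation recovering $(\star)$ by hand, which also pins down the sign of the $\beta_i\cdot Z$ term in the relative dimension formula used above.
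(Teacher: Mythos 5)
Your proof is correct, but note that the paper itself never proves this lemma: it is stated bare, as part of the degeneration-formula package imported from \cite{LR}, \cite{IP}, \cite{Li}, so there is no in-paper argument to compare yours against line by line. What you supply is, in substance, the standard derivation, carried out completely and correctly. The two dimension formulas you start from are right, and for threefold targets the $(\dim_{\mathbb C}X-3)(1-g)$ term vanishes, which --- as you observe --- is also exactly why the possibly disconnected graphs $\Gamma_i$ and the awkward genus-splitting relation $g=g_1+g_2+\ell(\mu)+1-|\Gamma_1|-|\Gamma_2|$ never enter the count; the bookkeeping then correctly reduces the lemma to your identity $(\star)$, and $(\star)$ does follow by pairing a single fixed class in $H^2(\chi)$ against the identified curve classes $i_{t*}\beta = i_{0*}(j_{1*}\beta_1+j_{2*}\beta_2)$. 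One simplification worth knowing: at the level of first Chern numbers you can bypass the relative logarithmic tangent bundle, its gluing across $Z$, and the d-semistability discussion entirely, because the class $-c_1(K_\chi)$ already has the required restrictions --- $-K_\chi|_{\chi_t}=c_1(T_X)$ since a fiber has trivial normal bundle, and $-K_\chi|_{X_i}=c_1(T_{X_i})-[Z]$ by adjunction combined with your own observation $N_{X_i/\chi}\cong\mathcal O_{X_i}(-Z)$. So the only genuinely geometric input is the one you isolated (the normal-bundle relation coming from smoothness of the total space), your treatment of it is sound, and the rest is formal index bookkeeping; the paper simply chose to quote the conclusion rather than derive it.
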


\section{Projective completion  }\label{pc}
   In this section, we describe how to obtain $Y_{\tilde{S}}$ from
$Y_S$ by the degenerations. This makes it possible to find some
relations between the local Gromov-Witten invariants of
$\tilde{S}$ and $S$.

   Let $S$ be a smooth surface and $Y_S= \mathbb{P}(K_S\oplus
{\mathcal O})$ the projective completion of its canonical bundle
$K_S$. Pick a smooth point $p_0\in S$ and blow it up, then we
obtain the blowup $\tilde{S}$ of $S$ at the point $p_0$ with the
natural projection $p: \tilde{S}\longrightarrow S$ and denote by
$E$ the exceptional divisor in $\tilde{S}$. Since $Y_S$ is the
bundle $\mathbb{P}(K_S\oplus {\mathcal O})$ over $S$, one can pull
this bundle back to $\tilde{S}$ using the projection $p$. It is
easy to see that the pullback bundle is the same thing as blowing
up the fiber over $p_0$. Denote by $\tilde{Y}_S$ the blowup of
$Y_S$ along the fiber $F_{p_0}\cong \mathbb{P}^1$ over $p_0$, and
the exceptional divisor in $\tilde{Y}_S$ is denoted by $D_1:=
E\times \mathbb{P}^1 = \mathbb{P}_{\mathbb{P}^1}({\mathcal
O}\oplus {\mathcal O})$. In $\tilde{Y}_S$, take a section,
$\sigma$ , corresponding to ${\mathcal O}\longrightarrow {\mathcal
O}\oplus K_S$, of the exceptional divisor $D_1$ over $E$ and blow
it up. Denote by $Z$ the blown-up manifold,then $Z$ has a natural
projection $\pi$ to $\tilde{S}$ given by the composition of the
blowup projection $Z\longrightarrow \tilde{Y}_S$ and the bundle
projection $\tilde{Y}_S\longrightarrow \tilde{S}$. It is easy to
see that the fiber $\pi^{-1}(E)$ has two normal crossing
components: $D_1\cong \mathbb{F}_0$ and $D_2\cong \mathbb{F}_1$
intersecting along a section $\sigma$ with the normal bundle
$N_{\sigma|\mathbb{F}_0}\cong {\mathcal O}$ and
$N_{\sigma|\mathbb{F}_1}\cong {\mathcal O}(-1)$ respectively.

Next, we consider the projective completion $Y_{\tilde{S}}$. Since
the restriction $K_{\tilde{S}}\mid_E$ of the canonical bundle
$K_S$ to the exceptional divisor $E$ in $\tilde{S}$ is isomorphic
to ${\mathcal O}(-1)$, so we can pick up a section, $\sigma_1$, of
the restriction of $Y_{\tilde{S}}$ to $E$ satisfying $\sigma_1^2
=-1$. Then we blow this section $\sigma_1$ up, and it is easy to
know that the blown-up manifold is $Z$.

Let $\tilde{\mathbb P}^2_{r}$ be the blowup of ${\mathbb P}^2$ at
r points. Pick one more point $p$ and blow it up, then we obtain
$\tilde{\mathbb P}^2_{r+1} $ with the map $p:\tilde{\mathbb
P}^2_{r+1}\longrightarrow \tilde{\mathbb P}^2_r$ and denote by $E$
the exceptional divisor in $\tilde{\mathbb P}^2_{r+1}$. It is
well-known that for $0\leq r \leq 3$, $\tilde{\mathbb{P}}_r^2$ is
toric, but for $4\leq r\leq 8$, $\tilde{\mathbb P}^2_r$ is
non-toric. In \cite{CKYZ}, the authors computed the genus zero
local Gromov-Witten invariants of $\tilde{\mathbb{P}}_r^2$ with
$0\leq r \leq 3$ of lower degree. As opposed to toric del Pezzo
surfaces, one can not directly use localization with respect to a
torus action because there is no torus action on a generic del
Pezzo surface  $\tilde{\mathbb P}^2_r$,$4\leq r\leq 8$. Our
Theorem \ref{thm-1-1} implies that for some degrees, we could
compute any genus local Gromov-Witten invariants of non-toric
surfaces $\tilde{\mathbb P}^2_r$ with $4\leq r\leq 8$ from the
local Gromov-Witten invariants of $\tilde{\mathbb P}^2_r$ with
$0\leq r\leq 3$.

\section{Main theorem }
In this section, we will study the change of local Gromov-Witten
invariants under the blowup of surface $S$. Throughout this
section, we assume that $S$ and its blowup surface  all are  Fano
surfaces.

If we blowup $Y_S$ along the fiber over the point $p_0\in S$, by
the blowup formula of Gromov-Witten invariant, see Theorem1.5 of
\cite{H1}, for the genus zero invariants we have
$n_{0,\beta}^{Y_S} = n^{\tilde{Y_S}}_{0, p!(\beta)}$. In this
section, we first want to generalize this result to the case of
any genus.

\begin{lemma}\label{result-1}
Suppose that $S$ and its blowup $\tilde{S}$ are Fano surfaces. Let
$\tilde{Y}_S$ be the blowup of $Y_S$ along the fiber over $p_0\in
S$. Then for any $\beta\in H_2(S, {\mathbb Z})$, we have
$$
 n_{g,\beta}^{Y_S} = \langle \mid \emptyset\rangle^{\tilde{Y}_S,
 D_1}_{g, p!(\beta)}
$$
where $D_1= {\mathbb P}_{{\mathbb P}^1}({\mathcal O}\oplus
{\mathcal O})\cong {\mathbb P}^1\times {\mathbb P}^1$ is the
exceptional divisor in $\tilde{Y}_S$, $p!(\beta) = PD p^*
PD(\beta)$ and $p: \tilde{S}\longrightarrow S$ is the natural
projection of the blowup.

\end{lemma}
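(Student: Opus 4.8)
The plan is to realize the passage from $Y_S$ to the pair $(\tilde Y_S, D_1)$ as a single application of the degeneration formula, taking for the family $\chi\to D$ the deformation to the normal cone of the fiber $F_{p_0}\subset Y_S$. Its central fiber is $\tilde Y_S\cup_{D_1}P$, where $P=\mathbb P(N_{F_{p_0}/Y_S}\oplus\mathcal O)$ and $\tilde Y_S=\mathrm{Bl}_{F_{p_0}}Y_S$ is exactly the blowup appearing in the lemma. Since $F_{p_0}$ is a fiber of the bundle $Y_S\to S$, its normal bundle is $N_{F_{p_0}/Y_S}\cong\mathcal O^{\oplus 2}$, so $P\cong\mathbb P^1\times\mathbb P^2$ and the common divisor is $\mathbb P(N_{F_{p_0}/Y_S})=D_1\cong\mathbb P^1\times\mathbb P^1$, which is simply connected as the framework requires. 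I would first record that $p!(\beta)\cdot D_1=0$: because $p!(\beta)=PD\,p^*PD(\beta)$ is a total transform and $p_*[D_1]=0$, the projection formula gives $p!(\beta)\cdot D_1=\beta\cdot p_*[D_1]=0$. Hence the only contact partition compatible with the class $p!(\beta)$ on the $\tilde Y_S$ side is the empty one.

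Applying the degeneration formula to this family writes $n^{Y_S}_{g,\beta}$ as a sum over admissible triples $\eta=(\Gamma_1,\Gamma_2,I_{\ell(\mu)})$ of products $C_\eta\,\langle\ \mid\delta_\mu\rangle^{\bullet\,\tilde Y_S,D_1}_{\Gamma_1}\,\langle\ \mid\check\delta_\mu\rangle^{\bullet\,P,D_1}_{\Gamma_2}$, with $\beta=\beta_1+\beta_2$ and $|\mu|=\beta_1\cdot D_1=\beta_2\cdot D_1$. The goal is to show that the only surviving term is the one with $\Gamma_2$ empty, which by the dichotomy ``$\ell(\mu)=0$ iff one $\Gamma_i$ is empty'' forces $\mu=\emptyset$, $\beta_2=0$ and $\beta_1=p!(\beta)$; the $P$-side then drops out and the expression collapses to $\langle\ \mid\emptyset\rangle^{\tilde Y_S,D_1}_{g,p!(\beta)}$, which is the claim. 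The genus bookkeeping is consistent: with $|\Gamma_1|=1$, $|\Gamma_2|=0$, $g_2=0$ and $\ell(\mu)=0$, the splitting relation $g=g_1+g_2+\ell(\mu)+1-|\Gamma_1|-|\Gamma_2|$ gives $g_1=g$. One should also verify that the surviving term carries coefficient $C_\eta=1$.

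The main obstacle is precisely this vanishing: I must show that every term with $\beta_2\neq 0$ contributes zero. A term with $\Gamma_2$ nonempty cannot have $\Gamma_1$ empty, since $\beta$ is a section class pushed in from $S^+$ and so is not a fiber class of $P\to\mathbb P^1$; hence such a term must have $\ell(\mu)\geq 1$. Any curve component landing in $P$ projects to the single point $p_0\in S$, so $\beta_2$ is a fiber-type class, and the triviality of $N_{F_{p_0}/Y_S}$ together with the dimension count of Lemma \ref{dimension} should rule out a nonzero virtual contribution from $(P,D_1)$, exactly as in the genus-zero blowup formula of Theorem 1.5 of \cite{H1} that this lemma generalizes. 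Carrying out this vanishing uniformly in $g$ is the technical heart of the argument; the identification of the surviving class as $p!(\beta)$ and the collapse of the $P$-side are then formal.
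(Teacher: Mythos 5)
Your setup coincides with the paper's: both degenerate $Y_S$ along the fiber $F_{p_0}$ (deformation to the normal cone), producing $X_1=\tilde Y_S$ and $X_2=\mathbb{P}_{\mathbb{P}^1}(\mathcal{O}\oplus\mathcal{O}\oplus\mathcal{O})\cong\mathbb{P}^2\times\mathbb{P}^1$ glued along $D_1\cong\mathbb{P}^1\times\mathbb{P}^1$, and both aim to show that only the term with trivial contact partition $\mu=\emptyset$ survives. But your proposal has a genuine gap exactly where you admit it does: the vanishing of all terms with nontrivial $\mu$ is asserted (``should rule out a nonzero virtual contribution \dots exactly as in the genus-zero blowup formula'') rather than proved, and this vanishing \emph{is} the lemma; everything else is bookkeeping. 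Note also that your preliminary observation $p!(\beta)\cdot D_1=0$ does less than it appears to: the degeneration formula sums over all splittings $\beta_1+\beta_2=\beta$ with $\beta_1\in H_2(\tilde Y_S)$ arbitrary, so terms with nontrivial $\mu$ have $\beta_1\cdot D_1=|\mu|>0$ and $\beta_1\neq p!(\beta)$; your observation only identifies the contact order of the one surviving term, it does not kill the others.

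The paper's proof closes this gap by an explicit dimension count, and its mechanism is in fact the reverse of the one you guess. You propose to show the $(P,D_1)$-side contribution vanishes; the paper instead uses \emph{positivity} on the $P$-side to force \emph{negative virtual dimension} on the $(\tilde Y_S,D_1)$-side. Concretely: writing $C_1(X_2)=\pi^*\mathcal{O}_{\mathbb{P}^1}(2)-3\xi$ via the projective-bundle formula and decomposing each component class $[u_i^+]$ into base and fiber parts, one gets $C_1(X_2)\cdot[u_i^+]\geq 3\sum\mu_i$ (here the contact condition $\xi\cdot[u_i^+]^f=\sum\mu_j$ enters), hence
$$
  \dim_{\mathbb C}\overline{\mathcal M}_{\Gamma_2}=\sum_i C_1[u_i^+]+\ell(\mu)-\sum\mu_i\ \geq\ \ell(\mu)+2\sum\mu_i ,
$$
and then the balance relation of Lemma \ref{dimension} gives $\dim_{\mathbb C}\overline{\mathcal M}_{\Gamma_1}\leq \ell(\mu)-2\sum\mu_i\leq -\sum\mu_i<0$ for any nontrivial $\mu$, so the relative invariant $\langle\ \mid\delta_\mu\rangle^{\tilde Y_S,D_1}_{g_1,\beta_1}$ vanishes and only $\mu=\emptyset$ contributes. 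This argument is uniform in $g$ precisely because it is a virtual-dimension estimate, not an induction or a genus-zero deformation argument; without carrying out some version of it, your proposal does not yet prove the lemma.
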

\begin{proof}
  We degenerate $Y_S$ along the fiber $F_{p_0}\cong {\mathbb P}^1$ over the blown-up point $p_0$.
  We obtain two smooth 3-folds
$$
   X_1 = \tilde{Y}_S, \hspace{1cm}  X_2 ={\mathbb P}_{{\mathbb
   P}^1}({\mathcal O}\oplus {\mathcal O}\oplus {\mathcal O})\cong
   {\mathbb P}^2\times {\mathbb P}^1,
$$
with the common divisor $D_1\cong {\mathbb P}^1\times {\mathbb
P}^1$.

Now we apply the degeneration formula to $n_{g,\beta}^{Y_S}$, then
we have
\begin{equation}\label{gluing-1}
  n_{g,\beta}^{Y_S} = \sum_\eta C_\mu \langle \mid
  \delta_\mu\rangle_{g_1,\beta_1}^{\tilde{Y}_S,D_1}\langle \mid
  \check{\delta}_\mu\rangle_{g_2,\beta_2}^{X_2,D_1},
\end{equation}
where the summation runs over all admissible configurations
$\eta=(\Gamma_1,\Gamma_2,I_{\ell(\mu)})$ and $C_\mu=m(\mu)|Aut
(\mu)|$.

Next we consider the contribution to the Gromov-Witten invariants
of each gluing component $\eta =(\Gamma_1, \Gamma_2 ,
I_{\ell(\mu)})$ where $\Gamma_1=(g_1,\beta_1,\mu)$ and
$\Gamma_2=(g_2,\beta_2,\mu)$. According to our convention, the
$X_2$-component $u^+: C^+\longrightarrow X_2$ may have many
connected components $u^+_i:C_i^+\longrightarrow X_2$,
$i=1,\cdots,l^+$. Denote by $[u^+_i]$ the homology class in $X_2$
represented by $u_i^+(C_i^+)$. Then we have
$$
  \dim_{\mathbb C}\overline{\mathcal M}_{\Gamma_2} =
  \sum_{i=1}^{l^+} C_1[u_i^+] + \ell(\mu) - \sum \mu_i,
$$
where $C_1$ is the first Chern class of $X_2$.

Let $V$ be a complex rank $r$ vector bundle over a complex
manifold $M$, and $\pi: {\mathbb P}(V)\longrightarrow M$ be the
corresponding projective bundle. Let $\xi_V$ be the first Chern
class of the tautological bundle in ${\mathbb P}(V)$. A simple
calculation shows
\begin{equation}\label{chernclass}
              C_1({\mathbb P}(V)) = \pi^* C_1(M) + \pi^*C_1(V) -
              r\xi_V.
\end{equation}

Applying (\ref{chernclass}) to $X_2$, we obtain
$$
      C_1(X_2) = \pi^*{\mathcal O}_{{\mathbb P}^1}(2) - 3\xi,
$$
where $\xi$ is the first Chern class of the tautological bundle in
$X_2$. Since the homology class $[u_i^+]$ may be decomposed into
the sum of the base class $[u_i^+]^{{\mathbb P}^1}$ and the fiber
class $[u_i^+]^f$, so we have
$$
  C_1(X_2)\cdot [u_i^+] = \pi^*{\mathcal O}_{{\mathbb P}^1}(2)\cdot
  [u_i^+]^{{\mathbb P}^1} - 3\xi\cdot [u_i^+]^f \geq 3\sum \mu_i.
$$
Therefore, we have
$$
    \dim_{\mathbb C}\overline{\mathcal M}_{\Gamma_2} \geq
    \ell(\mu) + 2\sum \mu_i.
$$

From Lemma \ref{dimension}, we have
$$
  \dim_{\mathbb C}\overline{\mathcal M}_{\Gamma_1} \leq \ell(\mu)
  - 2\sum \mu_i \leq -\sum \mu_i.
$$
This implies that for any nontrivial partition $\mu$, we have
$$
    \langle \mid \delta_\mu\rangle^{\tilde{Y}_S,D_1}_{g_1,\beta_1}
    =0.
$$
This means that the only summand with trivial partition $\mu
=\emptyset$ has the nonzero contribution to the right hand side of
(\ref{gluing-1}). Therefore we have
$$
 n_{g,\beta}^{Y_S} = \langle \mid \emptyset\rangle^{\tilde{Y}_S,
 D_1}_{g, p!(\beta)}.
$$
This completes the proof of the lemma.
\end{proof}

\begin{lemma}\label{result-2}
 For any $\beta\in H_2(S,{\mathbb Z})$, we have
 $$
  n_{g,p!(\beta)}^{\tilde{Y}_S} = \langle \mid \emptyset\rangle_{g,p!(\beta)}^{\tilde{Y}_S,D_1}.
 $$
\end{lemma}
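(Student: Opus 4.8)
The plan is to apply the degeneration formula a second time, now degenerating $\tilde{Y}_S$ by deformation to the normal cone of $D_1$. This replaces $\tilde{Y}_S$ by the central fibre $\tilde{Y}_S\cup_{D_1}P$, where $P=\mathbb{P}(N_{D_1|\tilde{Y}_S}\oplus\mathcal{O})$ is the projective completion of the normal bundle of $D_1$, glued to $\tilde{Y}_S$ along the infinity section $D_{1,\infty}\cong D_1$. The degeneration formula then expresses $n^{\tilde{Y}_S}_{g,p!(\beta)}$ as a sum over admissible triples $\eta=(\Gamma_1,\Gamma_2,I_{\ell(\mu)})$ of products $\langle\mid\delta_\mu\rangle^{\bullet\tilde{Y}_S,D_1}_{\Gamma_1}\langle\mid\check{\delta}_\mu\rangle^{\bullet P,D_1}_{\Gamma_2}$, where $\Gamma_1$ carries a class $\beta_1\in H_2(\tilde{Y}_S)$ and $\Gamma_2$ a class $\beta_2\in H_2(P)$. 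The goal is to show that the only surviving summand has empty partition and empty $P$-side, which is exactly $\langle\mid\emptyset\rangle^{\tilde{Y}_S,D_1}_{g,p!(\beta)}$.

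First I would record two numerical inputs. Since $D_1$ is the exceptional divisor lying over the fibre $F_{p_0}$ and $p!(\beta)=p^*\beta$ is a total transform, the curve class $p!(\beta)$ does not meet $E\subset\tilde{S}$, whence $p!(\beta)\cdot D_1=0$. Next, the identification $N_{F_{p_0}|Y_S}\cong\mathcal{O}^{\oplus 2}$ gives $D_1\cong\mathbb{P}^1\times\mathbb{P}^1$ and $N_{D_1|\tilde{Y}_S}\cong\mathcal{O}_{D_1}(0,-1)$; applying (\ref{chernclass}) to the bundle projection $q:P\longrightarrow D_1$ then yields $C_1(P)=q^*\mathcal{O}_{D_1}(2,1)-2\xi_P$, while the relative divisor $D_{1,\infty}$ has class $-\xi_P$.

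The heart of the argument is a dimension estimate on the $P$-side, parallel to the proof of Lemma \ref{result-1}. Writing $q_*\beta_2=(a,b)$ with $a,b\geq 0$, and letting $m=-\xi_P\cdot\beta_2\geq 0$ be the fibre degree, the contact condition gives $|\mu|=\beta_2\cdot D_{1,\infty}=m$. Pushing the splitting relation $\beta_1+\beta_2=p!(\beta)$ forward to $\tilde{Y}_S$ and intersecting with $D_1$, the identity $p!(\beta)\cdot D_1=0$ together with the relative constraint $\beta_1\cdot D_1=|\mu|$ and $\mathcal{O}_{D_1}(0,-1)\cdot(a,b)=-b$ forces $b=|\mu|$. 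Hence
$$
C_1(P)\cdot\beta_2=2a+b+2m=2a+3|\mu|\geq 3|\mu|,
$$
so that $\dim_{\mathbb{C}}\overline{\mathcal M}_{\Gamma_2}=C_1(P)\cdot\beta_2+\ell(\mu)-|\mu|\geq \ell(\mu)+2|\mu|$. Because the invariant $n^{\tilde{Y}_S}_{g,p!(\beta)}$ is a number, $\overline{\mathcal M}_\Gamma$ has virtual dimension zero, so Lemma \ref{dimension} gives $\dim_{\mathbb{C}}\overline{\mathcal M}_{\Gamma_1}\leq \ell(\mu)-2|\mu|\leq -|\mu|$, which is negative whenever $\mu\neq\emptyset$. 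Therefore every term with nontrivial partition vanishes.

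It remains to analyze the empty partition, where one of $\Gamma_1,\Gamma_2$ must be empty. If the $\tilde{Y}_S$-side were empty, the whole class $p!(\beta)$ would be realized in $P$; but $P$ maps onto $D_1$, which lies over $p_0\in S$, so the image in $S$ is a point and the class pushes forward to $0\neq\beta$ in $H_2(S)$, a contradiction when $\beta\neq 0$ (the case $\beta=0$ being trivial). Thus the $P$-side is empty and the unique surviving contribution is $\langle\mid\emptyset\rangle^{\tilde{Y}_S,D_1}_{g,p!(\beta)}$, which proves the lemma. The main obstacle is the geometry of the second paragraph: correctly identifying $N_{D_1|\tilde{Y}_S}$ and the class of the relative divisor on $P$, since the sharp bound $C_1(P)\cdot\beta_2\geq 3|\mu|$—and hence the strict negativity of $\dim_{\mathbb{C}}\overline{\mathcal M}_{\Gamma_1}$ for nontrivial $\mu$—hinges on the constraint $b=|\mu|$ extracted from $p!(\beta)\cdot D_1=0$.
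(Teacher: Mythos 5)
Your proposal is correct and follows essentially the same route as the paper: degenerate $\tilde{Y}_S$ to the normal cone of $D_1$, so that $X_1=\tilde{Y}_S$ and $X_2=\mathbb{P}_{D_1}(N_{D_1}\oplus\mathcal{O})$, then bound $C_1(X_2)\cdot\beta_2$ from below and invoke Lemma \ref{dimension} to force $\dim_{\mathbb{C}}\overline{\mathcal M}_{\Gamma_1}<0$ for every nontrivial partition $\mu$. Your bookkeeping is in fact slightly cleaner than the paper's: you correctly identify $N_{D_1|\tilde{Y}_S}\cong\mathcal{O}_{D_1}(0,-1)$ (the paper misstates it as $\mathcal{O}(-1,-1)$, though its computation then uses the tautological class $\xi_1$ anyway), your bound $C_1(P)\cdot\beta_2=2a+3|\mu|\geq 3|\mu|$ suffices just as well as the paper's claimed $4\sum\mu_i$, and you explicitly rule out the term with empty $\tilde{Y}_S$-side, a point the paper leaves implicit.
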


\begin{proof}

  We degenerate $\tilde{Y}_S$ along the exceptional divisor
  $D_1\cong {\mathbb P}^1\times {\mathbb P}^1$. Then we obtain two
  smooth  $3$-folds
  $$
      X_1 = \tilde{Y}_S, \hspace{1cm}   X_2 = {\mathbb
      P}_{D_1}( N_{D_1}\oplus {\mathcal O}),
  $$
  where the normal bundle of the divisor $D_1$ in $\tilde{Y}_S$ is $N_{D_1}= {\mathcal O}(-1,-1)$.

  Similar to the proof of Lemma \ref{result-1}, applying the
  degeneration formula to $n_{g,p!(\beta)}^{\tilde{Y}_S}$, we have
  \begin{equation}\label{formula-1}
    n_{g,p!(\beta)}^{\tilde{Y}_S} = \sum_\eta C_\mu \langle \mid
    \mu\rangle_{g_1,\beta_1}^{\tilde{Y}_S, D_1}\langle \mid
    \check{\mu}\rangle_{g_2,\beta_2}^{X_2,D_1},
  \end{equation}
where the summation runs over all admissible configurations
$\eta=(\Gamma_1, \Gamma_2,I_{\ell(\mu)})$ and $C_\mu = m(\mu)|Aut
(\mu)|$.

Now we consider the contribution to
$n_{g,p!(\beta)}^{\tilde{Y}_S}$ of each gluing component $\eta =
(\Gamma_1, \Gamma_2, I_{\ell(\mu)})$ with $\Gamma_i= (g_i,\beta_i,
\mu)$, $i=1,2$. Assume that the $X_2$-component
$u^+:C^+\longrightarrow X_2$ has many connected components $u_i^+:
C_i^+\longrightarrow X_2$, $i=1,\cdots,l^+$. Denote by $[u_i^+]$
the homology class represented by $u_i^+(C_i^+)$. Therefore we
have
$$
  \dim_{\mathbb C}\overline{\mathcal M}_{\Gamma_2} =
  \sum_{i=1}^{l^+} C_1[u_i^+] + \ell(\mu) - \sum \mu_i,
$$
where $C_1$ denotes the first Chern class of $X_2$.

Note that $X_2 = {\mathbb P}_{D_1}(N_{D_1}\oplus {\mathcal O})$
and $D_1={\mathbb P}_{{\mathbb P}^1}({\mathcal O}\oplus {\mathcal
O})$. Denote by $F_p\cong {\mathbb P}^1$ the fiber of $Y_S$ at the
point $p_0$. Applying (\ref{chernclass}) to $X_2$ and $D_1$, we
obtain
\begin{eqnarray*}
               C_1(X_2) &=& \pi^*C_1(D_1) + \pi^*C_1(N_{D_1}) - 2\xi\\
               &=& \pi^*C_1(F_{p_0}) + \pi^*C_1(N_{F_p|Y_S})- 2\xi_1 +\pi^*C_1(N_{D_1})
               - 2\xi,
\end{eqnarray*}
where $\xi_1$ and $\xi$ are the first Chern classes of the
tautological bundles in ${\mathbb P}(N_{F_{p_0}|Y_S})$ and
${\mathbb P}(N_{D_1}\oplus {\mathcal O})$ respectively. Here we
denote the Chern class and its pullback by the same symbol.  It is
well-known that the normal bundle to $D_1$ in $\tilde{Y}_S$ is
just the tautological line bundle on $D_1\cong {\mathbb
P}(N_{F_{p_0}|Y_S})$. Therefore $C_1(N_{D_1}) = \xi_1$. So we have
$$
    C_1(X_2) = \pi^*C_1(F_{p_0}) - \xi_1 - 2\xi.
$$

 Note that $X_2$ is a projective bundle over $D_1$ with fiber ${\mathbb
 P}^1$.  Let $L$ be the class of a line in the fiber ${\mathbb
 P}^1$ and $e$ be the class of a line in the fiber ${\mathbb P}^1$
 in $D_1= {\mathbb P}(N_{F_{p_0}|Y_S})$. Denote by $[u_i^+]^{F_{p_0}}$ the
 homology class of the projection in $F_{p_0}$ of the curve $u_i^+$.
 Denote by $[u_i^+]^f$ the difference of $[u_i^+]$ and
 $[u_i^+]^{F_{p_0}}$, i. e. $[u_i^+]^f = [u_i^+]-[u_i^+]^{F_{p_0}}$. Then
 it is easy to know $[u_i^+]^f = aL +be$. Since $\xi\cdot [u_i^+]
 = \sum \mu_j$, where the summation runs over ends of $u_i^+$, and
 $D_1\cdot [u_i^+] =0$, so we have $\xi\cdot [u_i^+]^f = a = \sum
 \mu_j$ and $D_1\cdot [u_i^+]^f = a-b =0$. Therefore, we have
 $a=b=\sum \mu_j$. So we have $[u_i^+]^f = \sum \mu_j(L+e)$. Since
 $C_1(F_{p_0}) + C_1(N_{F_{p_0}|Y_S})=C_1(F_{p_0})\geq 0$, we have
 $$
      \sum_{i=1}^{l^+} C_1[u_i^+] \geq  4\sum\mu_i.
 $$
 Therefore we have
 $$
     \dim_{\mathbb C}\overline{\mathcal M}_{\Gamma_2} \geq
     3\sum\mu_j + \ell(\mu).
 $$

From Lemma \ref{dimension}, we have
$$
  \dim_{\mathbb C}\overline{\mathcal M}_{\Gamma_1} \leq
  \ell(\mu)-3\sum\mu_j.
$$
Therefore, for any nontrivial partition $\mu$, we have $\dim
\overline{\mathcal M}_{\Gamma_1} <0$. This implies that the only
nonzero summand in the right hand side of (\ref{formula-1}) must
have the trivial partition $\mu = \emptyset$. Therefore, we have
$$
n_{g,p!(\beta)}^{\tilde{Y}_S} = \langle \mid
\emptyset\rangle_{g,p!(\beta)}^{\tilde{Y}_S,D_1}.
$$
This proves the lemma.

\end{proof}

Summarizing Lemma \ref{result-1} and Lemma \ref{result-2}, we have
\begin{theorem}\label{result-3}
 $$
            n_{g,\beta}^{Y_S} = n_{g, p!(\beta)}^{\tilde{Y}_S}.
$$
\end{theorem}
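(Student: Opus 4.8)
The plan is to prove Theorem \ref{result-3} by directly chaining together the two lemmas that precede it. Both Lemma \ref{result-1} and Lemma \ref{result-2} compute, via the degeneration formula, the same relative invariant $\langle \mid \emptyset\rangle^{\tilde{Y}_S, D_1}_{g, p!(\beta)}$ as their right-hand sides. Lemma \ref{result-1} shows that the absolute invariant $n^{Y_S}_{g,\beta}$ of the original projective completion equals this relative invariant, by degenerating $Y_S$ along the fiber $F_{p_0}$ and observing that every nontrivial contact partition $\mu$ forces $\dim_{\mathbb{C}}\overline{\mathcal M}_{\Gamma_1}<0$, killing all but the empty-partition term. Lemma \ref{result-2} performs an analogous degeneration of $\tilde{Y}_S$ along the exceptional divisor $D_1$ and shows that $n^{\tilde{Y}_S}_{g,p!(\beta)}$ equals the very same relative invariant, again by a dimension count that annihilates every nonempty $\mu$.

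The proof itself is therefore essentially a one-line transitivity argument. First I would invoke Lemma \ref{result-1} to write
\begin{equation*}
 n_{g,\beta}^{Y_S} = \langle \mid \emptyset\rangle^{\tilde{Y}_S, D_1}_{g, p!(\beta)}.
\end{equation*}
Next I would invoke Lemma \ref{result-2} to write
\begin{equation*}
 n_{g,p!(\beta)}^{\tilde{Y}_S} = \langle \mid \emptyset\rangle^{\tilde{Y}_S, D_1}_{g, p!(\beta)}.
\end{equation*}
Since the right-hand sides are literally the identical relative Gromov-Witten invariant of the pair $(\tilde{Y}_S, D_1)$ in genus $g$ and degree $p!(\beta)$, equating the two left-hand sides yields $n^{Y_S}_{g,\beta} = n^{\tilde{Y}_S}_{g,p!(\beta)}$, which is exactly the claim.

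The only point that requires a moment's care is confirming that the two lemmas really produce \emph{the same} relative invariant rather than merely similar-looking ones. I would check that the divisor $D_1 \cong \mathbb{P}^1 \times \mathbb{P}^1$, the target $\tilde{Y}_S$, the genus $g$, the degree $p!(\beta)$, and the empty contact condition all match verbatim between the two statements—which, by inspection of the lemma statements, they do. There is no genuine obstacle here: all the analytic work has already been discharged inside the two lemmas (the vanishing of higher-contact contributions via Lemma \ref{dimension} and the Chern-class computation \eqref{chernclass}). The present theorem is purely the bookkeeping step that stitches the two computations into the desired identity between absolute invariants of $Y_S$ and $\tilde{Y}_S$.
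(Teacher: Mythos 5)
Your proposal is correct and matches the paper exactly: the paper states Theorem \ref{result-3} with the single phrase ``Summarizing Lemma \ref{result-1} and Lemma \ref{result-2}'', i.e.\ precisely the transitivity argument through the common relative invariant $\langle \mid \emptyset\rangle^{\tilde{Y}_S, D_1}_{g, p!(\beta)}$ that you describe. Your additional check that the target, divisor, genus, degree, and contact data coincide in the two lemmas is the right (and only) point of care, and it holds by inspection.
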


Next, we want to compare the Gromov-Witten invariants
$n_{g,p!(\beta)}^{\tilde{Y}_S}$ of $\tilde{Y}_S$ to the
Gromov-Witten invariants of $Z $. In fact, we have

\begin{theorem}\label{result-4}
$$
   n_{g,p!(\beta)}^{\tilde{Y}_S} = n_{g,p!(\beta)}^Z.
$$
\end{theorem}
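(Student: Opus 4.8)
The plan is to prove Theorem \ref{result-4} by the same two-step degeneration strategy that produced Theorem \ref{result-3}, now applied to the blowup $Z \to \tilde{Y}_S$ along the section $\sigma$. Recall from Section \ref{pc} that $Z$ is the blowup of $\tilde{Y}_S$ along the smooth rational curve $\sigma \cong \mathbb{P}^1$ sitting in $D_1$, with exceptional divisor $D_2 \cong \mathbb{F}_1$. First I would pin down the normal bundle of $\sigma$: from the exact sequence $0 \to N_{\sigma|D_1} \to N_{\sigma|\tilde{Y}_S} \to N_{D_1|\tilde{Y}_S}|_\sigma \to 0$, together with $N_{\sigma|D_1} \cong N_{\sigma|\mathbb{F}_0} \cong \mathcal{O}$ and, restricting $N_{D_1|\tilde{Y}_S} = \mathcal{O}(-1,-1)$ from Lemma \ref{result-2} to $\sigma$, $N_{D_1|\tilde{Y}_S}|_\sigma \cong \mathcal{O}(-1)$, one gets $N_{\sigma|\tilde{Y}_S} \cong \mathcal{O} \oplus \mathcal{O}(-1)$. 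This is consistent with $D_2 = \mathbb{P}(N_{\sigma|\tilde{Y}_S}) \cong \mathbb{F}_1$.

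The first step, in analogy with Lemma \ref{result-1}, is to deform $\tilde{Y}_S$ to the normal cone of the curve $\sigma$. The generic fibre is $\tilde{Y}_S$ and the central fibre is $Z \cup_{D_2} X_2$, where $X_2 = \mathbb{P}(N_{\sigma|\tilde{Y}_S} \oplus \mathcal{O}) = \mathbb{P}(\mathcal{O} \oplus \mathcal{O}(-1) \oplus \mathcal{O})$ is a $\mathbb{P}^2$-bundle over $\mathbb{P}^1$, glued to $Z$ along $D_2$. Applying the degeneration formula to $n_{g,p!(\beta)}^{\tilde{Y}_S}$ and running the dimension estimate exactly as in the proofs of Lemma \ref{result-1} and Lemma \ref{result-2}, I would use (\ref{chernclass}) to compute $C_1(X_2) = \pi^*\mathcal{O}_{\mathbb{P}^1}(1) - 3\xi$, decompose each connected $X_2$-component $[u_i^+]$ into its base and fibre parts, and bound $C_1(X_2) \cdot [u_i^+]$ below by a positive multiple of $\sum \mu_i$. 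Via Lemma \ref{dimension} this forces $\dim_{\mathbb{C}} \overline{\mathcal{M}}_{\Gamma_1} < 0$ for every nontrivial partition $\mu$, so only $\mu = \emptyset$ survives and $n_{g,p!(\beta)}^{\tilde{Y}_S} = \langle \mid \emptyset\rangle_{g,p!(\beta)}^{Z, D_2}$.

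The second step, in analogy with Lemma \ref{result-2}, is to deform $Z$ to the normal cone of its exceptional divisor $D_2$. The central fibre is $Z \cup_{D_2} X_2'$ with $X_2' = \mathbb{P}(N_{D_2|Z} \oplus \mathcal{O})$ a $\mathbb{P}^1$-bundle over $D_2 \cong \mathbb{F}_1$, where $N_{D_2|Z} \cong \mathcal{O}_{D_2}(-1)$ is the tautological bundle of the blowup. The same vanishing argument then gives $n_{g,p!(\beta)}^{Z} = \langle \mid \emptyset\rangle_{g,p!(\beta)}^{Z, D_2}$. Since the two relative invariants produced in the two steps are the same, combining them yields $n_{g,p!(\beta)}^{\tilde{Y}_S} = n_{g,p!(\beta)}^{Z}$, which is the assertion.

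The main obstacle I expect is the positivity bookkeeping in the two dimension estimates rather than the formal structure. In each case one must correctly split every connected component class into base and fibre contributions, identify the contact contributions $\xi \cdot [u_i^+] = \sum \mu_j$ and the vanishing $D_2 \cdot [u_i^+] = 0$ coming from the fact that $p!(\beta)$ is a pullback class meeting the exceptional locus trivially, and verify that the negative tautological term in $C_1$ is dominated so that $C_1 \cdot [u_i^+]$ grows like a strictly positive multiple of $\sum \mu_i$. This growth is exactly what drives $\dim_{\mathbb{C}} \overline{\mathcal{M}}_{\Gamma_1}$ negative and eliminates all nontrivial partitions; checking it for the $\mathbb{P}^2$-bundle $X_2$ and the $\mathbb{P}^1$-bundle $X_2'$ is the only genuinely computational part of the argument.
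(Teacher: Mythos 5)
Your proposal follows essentially the same route as the paper: you degenerate $\tilde{Y}_S$ to the normal cone of the section $\sigma$ (central fibre $Z\cup_{\mathbb{F}_1}\mathbb{P}_\sigma(\mathcal{O}(-1)\oplus\mathcal{O}\oplus\mathcal{O})$), then degenerate $Z$ to the normal cone of the exceptional divisor $D_2\cong\mathbb{F}_1$, and in both cases use the Chern class formula (\ref{chernclass}) together with Lemma \ref{dimension} to kill all nontrivial partitions, identifying both absolute invariants with the same relative invariant $\langle\,\mid\emptyset\rangle^{Z,\mathbb{F}_1}_{g,p!(\beta)}$. Your explicit verification of $N_{\sigma|\tilde{Y}_S}\cong\mathcal{O}\oplus\mathcal{O}(-1)$ via the normal bundle exact sequence is a small addition the paper only asserts, but otherwise the argument coincides with the paper's proof.
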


\begin{proof}
In $\tilde{Y}_S$, take a section $\sigma$ of the exceptional
divisor $D_1$ over the old exceptional divisor $E$, then
$\sigma\cong {\mathbb P}^1$ and the normal bundle to $\sigma$ in
$\tilde{Y}_S$ is $N_{\sigma\mid \tilde{Y}_S}={\mathcal
O}_{{\mathbb P}^1}(-1) \oplus {\mathcal O}$. We degenerate
$\tilde{Y}_S$ along this section $\sigma$, then we obtain two
smooth 3-folds
$$
  X_1 = Z, \hspace{1cm} X_2= {\mathbb P}_{\sigma}({\mathcal
  O}_{{\mathbb P}^1}(-1)\oplus {\mathcal O}\oplus {\mathcal O}),
$$
with the Hirzebruch surface ${\mathbb F}_1= {\mathbb
P}_{\sigma}({\mathcal O}(-1)\oplus {\mathcal O})$ as the common
divisor.

Applying the degeneration formula to
$n_{g,p!(\beta)}^{\tilde{Y}_S}$, then we have
\begin{equation}\label{gluing-3}
n_{g,p!(\beta)}^{\tilde{Y}_S} = \sum_\eta C_\mu\langle \mid
\delta_\mu\rangle_{g_1, \beta_1}^{Z,{\mathbb F}_1} \langle \mid
\check{\delta}_\mu\rangle_{g_2,\beta_2}^{X_2,{\mathbb F}_1},
\end{equation}
where the summation runs over all admissible configurations $\eta
= (\Gamma_1,\Gamma_2,I_{\ell(\mu)})$ and $C_\mu = m(\mu)
|Aut(\mu)|$.

Similar to the proof of Lemma \ref{result-1}, we consider the
contribution to $n_{g,p!(\beta)}^{\tilde{Y}_r}$ of each gluing
component $\eta = (\Gamma_1, \Gamma_2,I_{\ell(\mu)})$. Assume that
the $X_2$-component $u^+:C^+\longrightarrow X_2$ has $l^+$
components $u_i^+:C_i^+\longrightarrow X_2$, $i=1,\cdots, l^+$.
Denote by $[u_i^+]$ the homology class in $X_2$ represented by
$u_i^+(C_i^+)$. Then we have
$$
  \dim_{\mathbb C}\overline{\mathcal M}_{\Gamma_2} =
  \sum_{i=1}^{l^+} C_1[u_i^+] + \ell(\mu) - \sum \mu_i,
$$
where  $C_1$ is the first Chern class of $X_2$. From
(\ref{chernclass}), it is easy to know
$$
    C_1(X_2) = \pi^*C_1({\mathcal O}_\sigma(1)) - 3\xi,
$$
where $\xi$ is the first Chern class of the tautological line
bundle over $X_2$. Therefore we have
$$
    \sum_{i=1}^{l^+}C_1[u_i^+] \geq 3 \sum \mu_i.
$$
Therefore, we have
$$
   \dim_{\mathbb C}\overline{\mathcal M}_{\Gamma_2} \geq \ell(\mu)
   + 2 \sum \mu_i.
$$

From Lemma \ref{dimension}, we have
$$
     \dim_{\mathbb C}\overline{\mathcal M}_{\Gamma_1} \leq
     \ell(\mu) - 2 \sum \mu_i\leq -\sum \mu_i.
$$
This means that for any nontrivial partition $\mu$, $\dim_{\mathbb
C}\overline{\mathcal M}_{\Gamma_1} <0$. This implies that the only
nonzero summand in the right hand side of (\ref{gluing-3}) must be
the trivial partition $\mu =\emptyset$. Therefore, we have
$$
   n_{g,p!(\beta)}^{\tilde{Y}_S} = \langle \mid
   \emptyset\rangle_{g,p!(\beta)}^{Z, {\mathbb F}_1}.
$$

Now it remains to prove
$$
     n_{g,p!(\beta)}^Z = \langle \mid
   \emptyset\rangle_{g,p!(\beta)}^{Z, {\mathbb F}_1}.
$$

To prove this, we degenerate $Z$ along the exceptional divisor
${\mathbb F}_1$. Then we obtain two smooth $3$-folds
$$
  X_1 = Z, \hspace{1cm} X_2 = {\mathbb P}_{{\mathbb F}_1}(N_{{\mathbb
  F}_1}\oplus {\mathcal O}),
$$
intersecting along the exceptional divisor ${\mathbb F}_1$ in $Z$
and the infinite section of the ${\mathbb P}^1$-bundle $X_2$.

 Applying the degenerate formula to
$n_{g,p!(\beta)}^Z$ , we have
\begin{equation}\label{gluing-4}
n_{g,p!(\beta)}^Z = \sum_\eta C_\mu\langle \mid
\delta_\mu\rangle_{g_1, \beta_1}^{Z,{\mathbb F}_1} \langle \mid
\check{\delta}_\mu\rangle_{g_2,\beta_2}^{X_2,{\mathbb F}_1},
\end{equation}
where the summation runs over all admissible configurations $\eta
= (\Gamma_1,\Gamma_2,T_{\ell(\mu)})$ and $C_\mu = m(\mu)
|Aut(\mu)|$.

Similar to the proof of Lemma \ref{result-2}, we consider the
contribution to $n_{g,p!(\beta)}^Z$ of each gluing component $\eta
= (\Gamma_1,\Gamma_2,I_{\ell(\mu)})$ with $\Gamma_i =
(g_i,\beta_i,\mu)$, $i = 1, 2$. Assume that the $X_2$-component
$u^+:C^+\longrightarrow X_2$ has many connected components
$u_i^+:C_i^+\longrightarrow X_2$, $i=1,\cdots,l^+$. Denote by
$[u_i^+]$ the homology class represented by $u_i^+(C_i^+)$.
Therefore we have
$$
  \dim_{\mathbb C}\overline{\mathcal M}_{\Gamma_2} =
  \sum_{i=1}^{l^+} C_1[u_i^+] + \ell(\mu) - \sum \mu_i,
$$
where $C_1$ denotes the first Chern class of $X_2$.

Note that $X_2 = {\mathbb P}_{{\mathbb F}_1}(N_{{\mathbb
F}_1}\oplus {\mathcal O})$ and ${\mathbb F}_1 = {\mathbb
P}_{\sigma}({\mathcal O}(-1)\oplus {\mathcal O})$. Applying
(\ref{chernclass}) to $X_2$ and ${\mathbb F}_1$, we obtain
\begin{eqnarray*}
   C_1(X_2) & = & \pi^*C_1({\mathbb F}_1) +\pi^* C_1(N_{{\mathbb F}_1}) -
   2\xi\\
   &=& \pi^*C_1({\mathcal O}_\sigma(1) - \xi_1 -2\xi,
\end{eqnarray*}
where $\xi_1$ and $\xi$ are the first Chern classes of the
tautological bundles in ${\mathbb P}_\sigma ({\mathcal
O}(-1)\oplus {\mathcal O})$ and ${\mathbb P}(N_{{\mathbb
F}_1}\oplus {\mathcal O})$ respectively. Here we denote the Chern
class and its pullback by the same symbol. The same calculation as
in the proof of Lemma \ref{result-2} shows that
$$
   \sum_{i=1}^{l^+} C_1[u_i^+] \geq 4\sum \mu_i.
$$
Therefore we have
$$
\dim_{\mathbb C}\overline{\mathcal M}_{\Gamma_2} \geq 3\sum\mu_i
+\ell(\mu).
$$

From Lemma \ref{dimension}, we have
$$
   \dim_{\mathbb C}\overline{\mathcal M}_{\Gamma_1} \leq \ell(\mu)
   - 3\sum \mu_j.
$$
Therefore, for any nontrivial partition $\mu$, we have
$\dim\overline{\mathcal M}_{\Gamma_1}<0$. This implies that the
only nonzero summand in the right hand side of (\ref{gluing-4})
must have the trivial partition $\mu=\emptyset$. Therefore we have
$$
   n_{g,p!(\beta)}^Z = \langle \mid
   \emptyset\rangle_{g,p!(\beta)}^{Z,{\mathbb F}_1}.
$$
This proves the theorem.
\end{proof}

Finally, we want to prove that $n_{g,p!(\beta)}^{Y_{\tilde{S}}}=
n_{g,p!(\beta)}^Z$.

\begin{theorem}\label{result-5}
$$
n_{g,p!(\beta)}^{Y_{\tilde{S}}}= n_{g,p!(\beta)}^Z
$$
\end{theorem}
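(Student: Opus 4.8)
The plan is to mimic the two-step degeneration used in the proof of Theorem \ref{result-4}, now applied to the pair $(Y_{\tilde S}, Z)$. Recall from Section \ref{pc} that $Z$ is obtained from $Y_{\tilde S}$ by blowing up the section $\sigma_1\cong{\mathbb P}^1$ of $Y_{\tilde S}|_E$ with $\sigma_1^2=-1$. First I would pin down the normal bundle $N_{\sigma_1|Y_{\tilde S}}$. Writing $W=Y_{\tilde S}|_E\cong{\mathbb F}_1$ for the divisor $\pi^{-1}(E)$, the normal bundle sequence of $\sigma_1\subset W\subset Y_{\tilde S}$ splits (there are no nontrivial extensions of line bundles on ${\mathbb P}^1$); since $N_{\sigma_1|W}=N_{\sigma_1|{\mathbb F}_1}\cong{\mathcal O}(-1)$ and $N_{W|Y_{\tilde S}}|_{\sigma_1}=\pi^*N_{E|\tilde S}|_{\sigma_1}\cong{\mathcal O}(-1)$, one gets $N_{\sigma_1|Y_{\tilde S}}\cong{\mathcal O}(-1)\oplus{\mathcal O}(-1)$, of total degree $-2$. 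In particular the exceptional divisor of $Z\to Y_{\tilde S}$ is $D\cong{\mathbb P}^1\times{\mathbb P}^1$, namely the divisor $D_1$ of Section \ref{pc}, with $N_{D|Z}\cong{\mathcal O}(-1,-1)$.

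For step one I would degenerate $Y_{\tilde S}$ to the normal cone of $\sigma_1$, obtaining the two smooth $3$-folds $X_1=Z$ and $X_2={\mathbb P}_{\sigma_1}(N_{\sigma_1|Y_{\tilde S}}\oplus{\mathcal O})={\mathbb P}({\mathcal O}(-1)\oplus{\mathcal O}(-1)\oplus{\mathcal O})$ meeting along $D\cong{\mathbb P}^1\times{\mathbb P}^1$. Applying the degeneration formula to $n_{g,p!(\beta)}^{Y_{\tilde S}}$ gives a sum over admissible $\eta=(\Gamma_1,\Gamma_2,I_{\ell(\mu)})$ of products $\langle\mid\delta_\mu\rangle_{g_1,\beta_1}^{Z,D}\langle\mid\check\delta_\mu\rangle_{g_2,\beta_2}^{X_2,D}$. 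By (\ref{chernclass}) and $\deg N_{\sigma_1|Y_{\tilde S}}=-2$, the base contributions cancel and $C_1(X_2)=3[D]$, so each $X_2$-component $u_i^+$ satisfies $C_1(X_2)\cdot[u_i^+]=3\,D\cdot[u_i^+]=3\sum\mu_j$; hence $\dim_{\mathbb C}\overline{\mathcal M}_{\Gamma_2}\geq\ell(\mu)+2\sum\mu_i$. As in Lemma \ref{result-1}, the virtual dimension of the absolute invariant vanishes, $\dim_{\mathbb C}\overline{\mathcal M}_\Gamma=0$ (because $Y_{\tilde S}$ is the projective completion of $K_{\tilde S}$ and $c_1(Y_{\tilde S})\cdot p!(\beta)=0$), so Lemma \ref{dimension} yields $\dim_{\mathbb C}\overline{\mathcal M}_{\Gamma_1}\leq\ell(\mu)-2\sum\mu_i$. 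Since $\sum\mu_i\geq\ell(\mu)\geq1$ for any nontrivial $\mu$, this is strictly negative, the relevant virtual class vanishes, and only $\mu=\emptyset$ survives, giving $n_{g,p!(\beta)}^{Y_{\tilde S}}=\langle\mid\emptyset\rangle_{g,p!(\beta)}^{Z,D}$.

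For step two I would degenerate $Z$ along the exceptional divisor $D$ itself. Since $N_{D|Z}\cong{\mathcal O}(-1,-1)$, the resulting $X_2={\mathbb P}_D(N_{D|Z}\oplus{\mathcal O})$ and relative divisor $D$ are exactly those occurring in Lemma \ref{result-2}, so the identical Chern-class estimate gives $C_1(X_2)\cdot[u_i^+]\geq4\sum\mu_j$, hence $\dim_{\mathbb C}\overline{\mathcal M}_{\Gamma_1}\leq\ell(\mu)-3\sum\mu_j<0$ for nontrivial $\mu$. Only the trivial partition contributes, so $n_{g,p!(\beta)}^{Z}=\langle\mid\emptyset\rangle_{g,p!(\beta)}^{Z,D}$. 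Comparing the two identities then gives $n_{g,p!(\beta)}^{Y_{\tilde S}}=n_{g,p!(\beta)}^{Z}$, as claimed.

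The main obstacle is the first step. Because the center $\sigma_1$ has normal bundle of total degree $-2$ rather than $-1$, the positivity of $C_1(X_2)$ is weaker than in Lemmas \ref{result-1} and \ref{result-2}, and the resulting bound $\dim_{\mathbb C}\overline{\mathcal M}_{\Gamma_1}\leq\ell(\mu)-2\sum\mu_i$ is tight: it still forces negativity only because $\sum\mu_i\geq\ell(\mu)$. I would therefore take care to verify the splitting $N_{\sigma_1|Y_{\tilde S}}\cong{\mathcal O}(-1)^{\oplus2}$, that $D$ here coincides with $D_1$ of Section \ref{pc}, and that the sign identifications $[D]=-\xi$ and $C_1(X_2)=3[D]$ hold, so that the inequality $C_1(X_2)\cdot[u_i^+]\geq3\sum\mu_j$ is genuinely valid; everything else then runs in parallel with the earlier lemmas.
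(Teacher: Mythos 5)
Your proposal is correct and follows essentially the same route as the paper: degenerate $Y_{\tilde S}$ to the normal cone of $\sigma_1$ (with $X_1=Z$, $X_2={\mathbb P}_{\sigma_1}({\mathcal O}(-1)\oplus{\mathcal O}(-1)\oplus{\mathcal O})$, common divisor ${\mathbb F}_0\cong{\mathbb P}^1\times{\mathbb P}^1$), then degenerate $Z$ along that exceptional divisor, and in each case use the Chern-class/dimension count to kill all nontrivial partitions, so both invariants equal $\langle\mid\emptyset\rangle_{g,p!(\beta)}^{Z,{\mathbb F}_0}$. Your added verifications (the splitting $N_{\sigma_1|Y_{\tilde S}}\cong{\mathcal O}(-1)^{\oplus 2}$, the identification of the exceptional divisor with $D_1$, and the explicit use of $\dim_{\mathbb C}\overline{\mathcal M}_\Gamma=0$) are correct details that the paper leaves implicit.
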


\begin{proof}
   Take a section $\sigma_1\cong {\mathbb P}^1$ of $Y_{\tilde{S}}\mid_E\cong {\mathbb F_1}$
such that $\sigma_1^2=-1$. Then we degenerate $Y_{\tilde{S}}$
along the section $\sigma_1$. Then we obtain two $3$-folds, see
Section \ref{pc},
$$
      X_1=Z, \hspace{1cm} X_2 = {\mathbb P}_{\sigma_1}({\mathcal
      O}(-1)\oplus {\mathcal O}(-1)\oplus {\mathcal O}),
$$
with the common divisor ${\mathbb F}_0= {\mathbb
P}_{\sigma_1}({\mathcal O}(-1)\oplus {\mathcal O}(-1))$.

Applying the degenerate formula to
$n_{g,p!(\beta)}^{Y_{\tilde{S}}}$, we have
\begin{equation}\label{gluing-5}
n_{g,p!(\beta)}^{Y_{\tilde{S}}} = \sum_\eta C_\mu\langle \mid
\delta_\mu\rangle_{g_1, \beta_1}^{Z,{\mathbb F}_0} \langle \mid
\check{\delta}_\mu\rangle_{g_2,\beta_2}^{X_2,{\mathbb F}_0},
\end{equation}
where the summation runs over all admissible configurations $\eta
= (\Gamma_1,\Gamma_2,T_{\ell(\mu)})$ and $C_\mu = m(\mu)
|Aut(\mu)|$.

Similar to the proof of Lemma \ref{result-1}, we need to prove
that the summand with nonzero contribution in the right hand side
of (\ref{gluing-5}) must have trivial partition $\mu = \emptyset$.
Using the same notation as before, we have
$$
  \dim_{\mathbb C}\overline{\mathcal M}_{\Gamma_2} =
  \sum_{i=1}^{l^+} C_1[u_i^+] + \ell(\mu) - \sum \mu_i,
$$
where $C_1$ denotes the first Chern class of $X_2$.

Note that $X_2 = {\mathbb P}_{\sigma_1}({\mathcal O}(-1)\oplus
{\mathcal O}(-1)\oplus {\mathcal O})$. From (\ref{chernclass}), it
is easy to know
\begin{equation*}
    C_1(X_2) = \pi^*C_1(\sigma_1) + \pi^*C_1({\mathcal O}(-1)\oplus
    {\mathcal O}(-1)) -3\xi = -3\xi,
\end{equation*}
where $\xi$ is the first Chern class of the tautological line
bundle over $X_2$. Therefore we have
$$
   \dim_{\mathbb C}\overline{\mathcal M}_{\Gamma_2} = \ell(\mu) +
   2\sum\mu_j.
$$

From Lemma \ref{dimension}, we have
$$
   \dim_{\mathbb C}\overline{\mathcal M}_{\Gamma_1} = \ell(\mu)
   -2\sum \mu_j \leq - \sum\mu_j.
$$
This means that for any nontrivial partition $\mu$, $\dim_{\mathbb
C}\overline{\mathcal M}_{\Gamma_1} <0$. This implies that the only
nonzero summand in the right hand of (\ref{gluing-5}) must have
the trivial partition $\mu=\emptyset$. Therefore we have
$$
    n_{g,p!(\beta)}^{Y_{\tilde{S}}} = \langle \mid
    \emptyset\rangle_{g,p!(\beta)}^{Z,{\mathbb F}_0}.
$$

Now it remains to prove
\begin{equation}\label{lastone}
 n_{g,p!(\beta)}^Z = \langle\mid
 \emptyset\rangle_{g,p!(\beta)}^{Z,{\mathbb F}_0}.
\end{equation}

To prove this, we degenerate $Z$ along the exceptional divisor
${\mathbb F}_0$. Then we obtain two $3$-folds
$$
     X_1= Z, \hspace{1cm} X_2= {\mathbb P}_{{\mathbb
     F}_0}(N_{{\mathbb F}_0}\oplus {\mathcal O}).
$$

Similar to the proof of Theorem \ref{result-4}, we consider the
contribution of each gluing component $\eta=
(\Gamma_1,\Gamma_2,I_{\ell(\mu)})$. Using the same notation as
before, we have
$$
\dim_{\mathbb C}\overline{\mathcal M}_{\Gamma_2} =
\sum_{i=1}^{l^+} C_1[u_i^+] +\ell(\mu) - \sum\mu_j,
$$
where $C_1$ denotes the first Chern class of $X_2$.

Note that $X_2= {\mathbb P}_{{\mathbb F}_0}(N_{{\mathbb
F}_0}\oplus {\mathcal O})$ and ${\mathbb F}_0= {\mathbb
P}_{\sigma_1}({\mathcal O}(-1)\oplus {\mathcal O}(-1)$. Applying
(\ref{chernclass}) to $X_2$ and ${\mathbb F}_0$, we have
\begin{eqnarray*}
    c_1(X_2) &=& \pi^*C_1({\mathbb F}_0) + \pi^*C_1(N_{{\mathbb F}_0})
    -2\xi\\
    &=& \pi^*C_1(\sigma_1) + \pi^*C_1({\mathcal O}(-1)\oplus
    {\mathcal O}(-1)) -2\xi_1 + \pi^*C_1(N_{{\mathbb F}_0})
    -3\xi\\
    &=& -\xi_1 -2\xi,
\end{eqnarray*}
where $\xi_1$ and $\xi$ are the first Chern classes of the
tautological bundles in ${\mathbb P}_{\sigma_1} ({\mathcal
O}(-1)\oplus {\mathcal O}(-1))$ and ${\mathbb P}(N_{{\mathbb
F}_0}\oplus {\mathcal O})$ respectively. The same calculation as
in the proof of Lemma \ref{result-2} shows that
$$
   \dim_{\mathbb C}\overline{\mathcal M}_{\Gamma_2} = \ell(\mu)+ 3\sum\mu_j.
$$

From Lemma \ref{dimension}, we have
$$
   \dim_{\mathbb C}\overline{\mathcal M}_{\Gamma_1} =
   \ell(\mu)-3\sum\mu_j.
$$
As before, this implies (\ref{lastone}). This completes the proof
of the theorem.

\end{proof}

\begin{remark}
From (\ref{local=GW}), Theorem \ref{result-4} and Theorem
\ref{result-5}, it is easy to know that Theorem \ref{thm-1-1}
holds.
\end{remark}

\end{document}